\newtheorem{theorem}{Theorem}
\newtheorem{lemma}[theorem]{Lemma}
\newtheorem{proposition}[theorem]{Proposition}
\newenvironment{proof}[1][Proof]{\noindent\textbf{#1.} }{\ \rule{0.5em}{0.5em}}
\begin{document}

\author{Marek Galewski}
\title{A critical point theorem on a closed ball and some applications to
boundary value problems}
\maketitle

\begin{abstract}
We consider a functional being a difference of two differentiable convex
functionals on a closed ball. Existence and multiplicity of critical points
is investigated. Some applications are given.
\end{abstract}

\textbf{Keywords: }critical point, multiplicity, convexity

\textbf{Mathematics Subject Classification: }49J27, 39A12, 39A10

\section{Introduction}

In this paper we are concerned with the existence of a critical point to a
differentiable functional on a closed ball. With additional assumptions
pertaining to the mountain geometry we are in position to get the existence
of another distinct critical point as well. In a finite dimensional setting
the third critical point can also be obtained.

Let us introduce the space setting and the structure condition required on
the action functional. We assume that

\begin{description}
\item[H1] $E$ is a real reflexive Banach space compactly embedded into
another reflexive Banach space $Z;$

\item[H2] $\Phi :E\rightarrow 
%TCIMACRO{\U{211d} }%
%BeginExpansion
\mathbb{R}
%EndExpansion
,$ $H:Z\rightarrow 
%TCIMACRO{\U{211d} }%
%BeginExpansion
\mathbb{R}
%EndExpansion
$\ are Fr\'{e}chet differentiable convex functionals with derivatives $%
\varphi :E\rightarrow E^{\ast },$ $h:Z\rightarrow Z^{\ast }$ respectively;

\item[\textbf{H3}] there exist constants $\alpha >1,\gamma >0$ such that%
\begin{equation*}
\gamma \left\Vert v\right\Vert ^{\alpha }\leq \left\langle \varphi \left(
v\right) ,v\right\rangle _{E^{\ast },E}\text{ for all }v\in E\text{.}
\end{equation*}
\end{description}

We denote by $c>0$ the embedded constant, i.e. 
\begin{equation*}
\left\Vert v\right\Vert _{Z}\leq c\left\Vert v\right\Vert _{E}\text{ for all 
}v\in E
\end{equation*}%
and let $B_{\rho }\subset E$ be closed ball centered at $0$ in $E$ with
radius $\rho $. Assuming additionally that functional $x\rightarrow
\left\Vert h\left( x\right) \right\Vert _{Z^{\ast }}$ is bounded from above
on $B_{\rho }$ we will determine such a value $\lambda ^{\ast }>0$ that for
each $\lambda $ in the interval $\left( 0,\lambda ^{\ast }\right] $ the
corresponding Euler action functional $J:E\rightarrow \mathbb{%
%TCIMACRO{\U{211d} }%
%BeginExpansion
\mathbb{R}
%EndExpansion
}$%
\begin{equation*}
J\left( u\right) =\Phi \left( u\right) -\lambda H\left( u\right) 
\end{equation*}%
has a critical point on $B_{\rho }$. This implies the solvability of 
\begin{equation}
\varphi \left( u\right) =\lambda h\left( u\right) \text{, }u\in E
\label{pk_row1}
\end{equation}%
which is the Euler-Lagrange equation for $J$. Note that $u$ need not belong
to the interior of the ball $B_{\rho }$ and therefore we cannot use the
classical variational tool such as Ekelenad's variational principle in order
to demonstrate that the minimizer is a critical point. Such an approach is
used in \cite{BerJebMawhin} for a functional satisfying the PS-condition and
considered on a closed ball. If a minimizer is located in the interior of
this ball then it be proved via the Ekelenad's variational principle that it
is a critical point. When the PS-condition is not assumed, for example the
Fenchel-Young transform can be applied to prove that a minimizer is a
critical point. This approach is sketched in \cite{No1} and further
developed in several papers, see for example \cite{MGapll} and references
therein. In the present work we use only basic convexity and concavity to
the critical point together with some variational techniques based on the
Weierstrass Theorem.

Summarizing our approach: the first critical point (which lies in the ball,
perhaps on the boundary of the ball) is obtained through the Weierstrass
Theorem, direct method of the calculus of variations and convexity
relations, while the second critical point, under assumption that the
PS-condition is satisfied, is obtained with the aid of a general type of a
Mountain Pass Lemma. In a finite dimensional case, we obtain a third
critical point through direct maximization.

We provide applications to elliptic second order partial differential
equations and their discrete analogons put in the form of an algebraic
system.

For a background on variational methods we refer to \cite{KRV} for
differential equations to \cite{KufnerFucik}, \cite{Ma} while for a
background on difference equations to \cite{agarwalBOOK}. As for the
algebraic systems serving as example in this work, the literature is very
rich and we mention the following sources, \cite{MOLICALG2}, \cite{molicaalg}%
, \cite{YanZhangAPMacc}. The ideas connected with three critical point
theorems - different from those used in this work - are to be found for
example in \cite{bon1}, \cite{ric1}. Let us mention \cite{precup}, \cite%
{BerJebMawhin} for some recent results concerning a general type of critical
point theorem on a bounded set (with the PS-condition which we do not need),
and to \cite{serban2} for some applications of multiple critical point
theorem from \cite{BerJebMawhin} to discrete Neumann anisotropic problems.
Note that in \cite{precup} the bounded critical point theorem due to
Schechter is investigated, so the setting is in a Hilbert space, while in 
\cite{BerJebMawhin} it is a Banach space. The application of another type of
critical point on closed sets has just been developed by Marano, see \cite%
{marano1}, and to \cite{marano2} for applications to differential
inclusions, and also some earlier result \cite{MA}.

We provide necessary mathematical prerequisites which are needed for the
proof of the main multiplicity result.

Functional $J:E\rightarrow \mathbb{%
%TCIMACRO{\U{211d} }%
%BeginExpansion
\mathbb{R}
%EndExpansion
}$ satisfies the Palais-Smale condition (PS-condition for short) if every
sequence $(u_{n})$ such that $\{J(u_{n})\}$ is bounded and $J^{\prime
}(u_{n})\rightarrow 0$, has a convergent subsequence.

\begin{lemma}
\label{MPT}(\textbf{Mountain Pass Lemma, MPL Lemma}) Let $E$ be a Banach
space and assume that $J\in C^{1}(E,\mathbb{%
%TCIMACRO{\U{211d} }%
%BeginExpansion
\mathbb{R}
%EndExpansion
})$ satisfies the PS-condition. Let $S$ be a closed subset of $E$ which
disconnects $E$. Let $x_{0}$ and $x_{1}$ be points of $E$ which are in
distinct connected components of $E\backslash S$. Suppose that $J$ is
bounded below in $S$, and in fact the following condition is verified for
some $b$ 
\begin{equation}
\inf_{x\in S}J(x)\geq b\text{ and }\max \{J(x_{0}),J(x_{1})\}<b\text{.}
\label{condMPT}
\end{equation}%
If we denote by $\Gamma $ the family of continuous paths $\gamma
:[0,1]\rightarrow E$ joining $x_{0}$ and $x_{1},$ then%
\begin{equation*}
c:=\underset{\gamma \in \Gamma }{\inf }\underset{s\in \lbrack 0,1]}{\max }%
J(\gamma (s))\geq \max \{J(x_{0}),J(x_{1})\}>-\infty
\end{equation*}%
is a critical value and $J$ has a non-zero critical point $x$ at level $c.$
\end{lemma}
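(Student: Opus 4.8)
The plan is to prove the statement in three stages: first establish the lower bound $c\geq b$ by a connectedness argument using the separating set $S$; then invoke the PS-condition to produce a deformation that pushes down sublevel sets away from the endpoints; and finally argue by contradiction that $c$ must be a critical value.

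First I would show that \emph{every} path in $\Gamma$ must meet $S$. For $\gamma\in\Gamma$ the image $\gamma([0,1])$ is a connected subset of $E$, being the continuous image of the connected interval $[0,1]$. If this image avoided $S$, it would be a connected subset of $E\setminus S$ and hence contained in a single connected component of $E\setminus S$; but its endpoints $x_{0}=\gamma(0)$ and $x_{1}=\gamma(1)$ lie in distinct components, a contradiction. Thus there is $s^{\ast}\in[0,1]$ with $\gamma(s^{\ast})\in S$, whence $\max_{s}J(\gamma(s))\geq J(\gamma(s^{\ast}))\geq\inf_{x\in S}J(x)\geq b$. Taking the infimum over $\gamma\in\Gamma$ yields $c\geq b>\max\{J(x_{0}),J(x_{1})\}>-\infty$, which in particular gives the asserted inequality $c\geq\max\{J(x_{0}),J(x_{1})\}$.

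Next, suppose for contradiction that $c$ is not a critical value. I would first use the PS-condition to find $\varepsilon,\delta>0$, with $\varepsilon$ small enough that $c-2\varepsilon>\max\{J(x_{0}),J(x_{1})\}$, such that $\left\Vert J^{\prime}(u)\right\Vert_{E^{\ast}}\geq\delta$ for all $u$ in the strip $J^{-1}([c-2\varepsilon,c+2\varepsilon])$: were no such bound available, one could extract a sequence $(u_{n})$ with $J(u_{n})\to c$ and $J^{\prime}(u_{n})\to0$, and the PS-condition would furnish a convergent subsequence whose limit is a critical point at level $c$, contrary to assumption. With the gradient bounded away from zero on the strip, I would construct a locally Lipschitz pseudo-gradient field and integrate the associated truncated flow to obtain a continuous deformation $\eta:[0,1]\times E\to E$ with $\eta(0,\cdot)=\mathrm{id}$, with $\eta(t,u)=u$ whenever $J(u)\notin[c-2\varepsilon,c+2\varepsilon]$, and with $\eta(1,J^{c+\varepsilon})\subset J^{c-\varepsilon}$, where $J^{a}:=\{u\in E:J(u)\leq a\}$.

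Finally, by definition of $c$ as an infimum I would pick $\gamma\in\Gamma$ with $\max_{s}J(\gamma(s))<c+\varepsilon$, so that $\gamma([0,1])\subset J^{c+\varepsilon}$, and set $\widetilde{\gamma}(s):=\eta(1,\gamma(s))$. Since $J(x_{0}),J(x_{1})<c-2\varepsilon$, the deformation fixes the endpoints, giving $\widetilde{\gamma}(0)=x_{0}$ and $\widetilde{\gamma}(1)=x_{1}$, so $\widetilde{\gamma}\in\Gamma$; but then $\max_{s}J(\widetilde{\gamma}(s))\leq c-\varepsilon$, contradicting $c=\inf_{\Gamma}\max J$. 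Hence $c$ is a critical value, and there exists $x$ with $J^{\prime}(x)=0$ and $J(x)=c$. As $J(x)=c\geq b>\max\{J(x_{0}),J(x_{1})\}$, the point $x$ is distinct from $x_{0}$ and $x_{1}$, yielding the desired nontrivial critical point. The main obstacle is the construction of the deformation $\eta$ through the pseudo-gradient flow together with the uniform lower bound on $\left\Vert J^{\prime}\right\Vert_{E^{\ast}}$ over the strip; this is precisely the step where the PS-condition is indispensable, whereas the crossing argument and the concluding contradiction are comparatively routine.
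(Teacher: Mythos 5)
The paper does not actually prove this lemma: it is quoted as a known background result (the Rabinowitz/Brezis--Nirenberg form of the Mountain Pass Theorem with a separating set $S$, as found e.g.\ in the book of Jabri cited in the bibliography), so there is no in-paper argument to compare yours against. Judged on its own, your proof is the standard and correct one: the connectedness argument showing every path in $\Gamma$ meets $S$ gives $c\geq b>\max\{J(x_{0}),J(x_{1})\}$, and the contradiction via the quantitative deformation lemma is exactly how this theorem is proved in the literature. Two small remarks. First, the step you rightly flag as the main obstacle --- the existence of a locally Lipschitz pseudo-gradient field for a $C^{1}$ functional on a Banach space and the well-posedness of the truncated flow $\eta$ --- is asserted rather than carried out; in a complete write-up you would either prove the quantitative deformation lemma or cite it, and note that the PS-condition enters only in converting ``no critical point at level $c$'' into the uniform bound $\Vert J^{\prime}(u)\Vert_{E^{\ast}}\geq\delta$ on the strip, precisely as you describe. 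Second, your conclusion yields a critical point $x$ with $J(x)=c\geq b$, hence $x\neq x_{0}$ and $x\neq x_{1}$; the lemma's claim that $x$ is \emph{non-zero} only follows under the implicit convention that $0$ is one of the endpoints or that $J(0)<b$, which is how the lemma is used later in the paper (with $x_{0}=u$ the minimizer), so this is a looseness of the statement rather than a gap in your argument.
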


\section{A critical point theorem}

We begin with some general result which generalizes the main result from 
\cite{MGapll}

\begin{theorem}
\label{general}Assume that \textbf{H1},\textbf{\ H2} are satisfied. Fix some 
$\lambda ^{\ast }>0$ and let $u,v\in E$ be such that%
\begin{equation}
J\left( u\right) \leq J\left( v\right) \text{ and }\varphi \left( v\right)
=\lambda ^{\ast }h\left( u\right)  \label{zal}
\end{equation}%
Then $u$ is a critical point to $J$, and thus it solves (\ref{pk_row1}).
\end{theorem}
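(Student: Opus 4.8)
The plan is to exploit convexity so as to recognize $u$ as a global minimizer of the auxiliary convex, Fr\'{e}chet differentiable functional
\[
\Psi (w):=\Phi (w)-\left\langle \varphi (v),w\right\rangle _{E^{\ast },E},\qquad w\in E,
\]
whose derivative is $\Psi ^{\prime }(w)=\varphi (w)-\varphi (v)$. Once $u$ minimizes $\Psi $ globally, the necessary condition $\Psi ^{\prime }(u)=0$ gives $\varphi (u)=\varphi (v)=\lambda ^{\ast }h(u)$, which is exactly (\ref{pk_row1}). First I would record the two subgradient inequalities that come from \textbf{H2}. Convexity and differentiability of $\Phi $ at $v$ give $\Phi (w)-\Phi (v)\geq \left\langle \varphi (v),w-v\right\rangle _{E^{\ast },E}$ for all $w\in E$, which says precisely that $v$ minimizes $\Psi $, i.e. $\Psi (w)\geq \Psi (v)$ for every $w$. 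Convexity and differentiability of $H$ at $u$ give $H(v)-H(u)\geq \left\langle h(u),v-u\right\rangle _{Z^{\ast },Z}$, hence $H(u)-H(v)\leq \left\langle h(u),u-v\right\rangle _{Z^{\ast },Z}$.

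At this point I would be careful with the duality bookkeeping induced by \textbf{H1}: the identity $\varphi (v)=\lambda ^{\ast }h(u)$ is to be read in $E^{\ast }$, the element $h(u)\in Z^{\ast }$ being restricted to $E$ via the embedding. Thus for every $w\in E\subset Z$ one has $\left\langle \varphi (v),w\right\rangle _{E^{\ast },E}=\lambda ^{\ast }\left\langle h(u),w\right\rangle _{Z^{\ast },Z}$. Combining the hypothesis $J(u)\leq J(v)$, written as $\Phi (u)-\Phi (v)\leq \lambda ^{\ast }(H(u)-H(v))$, with the convexity inequality for $H$ and this identification, I obtain
\[
\Phi (u)-\Phi (v)\leq \lambda ^{\ast }\left\langle h(u),u-v\right\rangle _{Z^{\ast },Z}=\left\langle \varphi (v),u-v\right\rangle _{E^{\ast },E},
\]
that is $\Psi (u)\leq \Psi (v)$.

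Together with the reverse inequality $\Psi (u)\geq \Psi (v)$ from the first step, this forces $\Psi (u)=\Psi (v)=\min_{E}\Psi $, so $u$ is a global minimizer of the differentiable functional $\Psi $. Consequently $\Psi ^{\prime }(u)=\varphi (u)-\varphi (v)=0$, and using $\varphi (v)=\lambda ^{\ast }h(u)$ once more yields $\varphi (u)=\lambda ^{\ast }h(u)$, so $u$ solves (\ref{pk_row1}) and is a critical point of $J$ with $\lambda =\lambda ^{\ast }$.

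The step I expect to be the conceptual crux, rather than a genuine obstacle, is the passage from the inequality $J(u)\leq J(v)$ to the critical point condition: the chain above in fact holds with equality throughout, and the key is to reinterpret this tightness of the convexity estimate as the assertion that $u$ attains the global minimum of $\Psi $. Once this is seen, differentiability of $\Phi $ closes the argument at once. Note that neither the Palais--Smale condition, nor compactness of the embedding (beyond the pairing identification), nor boundedness of $x\mapsto \left\Vert h(x)\right\Vert _{Z^{\ast }}$ on $B_{\rho }$ is used here; these enter only in the subsequent existence and multiplicity results.
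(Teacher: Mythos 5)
Your proposal is correct and follows essentially the same route as the paper: both arguments combine the subgradient inequality for $\Phi$ at $v$, the one for $H$ at $u$, and the hypothesis $J(u)\leq J(v)$ to force the convexity estimates to hold with equality, and then conclude $\varphi(u)=\varphi(v)$. Your packaging via the auxiliary functional $\Psi(w)=\Phi(w)-\left\langle \varphi(v),w\right\rangle$ is just an equivalent reformulation of the paper's observation that $\varphi(v)$ lies in the subdifferential of $\Phi$ at $u$ (which is the singleton $\{\varphi(u)\}$ by differentiability), and it has the minor merit of making the duality pairing between $E^{\ast}$ and $Z^{\ast}$ explicit.
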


\begin{proof}
The proof follows by simple calculations pertaining to convexity of $\Phi $
and concavity of $-H$. Note that $J\left( u\right) \leq J\left( v\right) $
implies 
\begin{equation*}
\Phi \left( u\right) -\Phi \left( v\right) \leq -\lambda H\left( v\right)
-\left( -\lambda H\left( u\right) \right)
\end{equation*}%
so by standard inequalities following from definition of convexity of $\Phi $
at $v$ and concavity of $-H$ at $u$ we obtain 
\begin{equation*}
\left\langle \varphi \left( v\right) ,u-v\right\rangle \leq \Phi \left(
u\right) -\Phi \left( v\right) \leq -\lambda H\left( v\right) -\left(
-\lambda H\left( u\right) \right) \leq \left\langle \lambda h\left( u\right)
,u-v\right\rangle .\bigskip
\end{equation*}%
The above and the equality $\varphi \left( v\right) =\lambda h\left(
u\right) $ provide that $\Phi \left( u\right) =\left\langle \varphi \left(
v\right) ,u-v\right\rangle +\Phi \left( v\right) $. So from this relation
and by convexity again we have%
\begin{equation*}
\left\langle \varphi \left( v\right) ,v-u\right\rangle =\Phi \left( v\right)
-\Phi \left( u\right) \geq \left\langle \varphi \left( u\right)
,v-u\right\rangle .
\end{equation*}%
This means that both $\varphi \left( v\right) $ and $\varphi \left( u\right) 
$ are the elements of a subdifferential of $\Phi $ at $u$. Since, by
differentiability and convexity, this is a singleton, \cite{Ma}, we get that 
$\varphi \left( v\right) =\varphi \left( u\right) $. This by the equation in
(\ref{zal}) we see that $u$ is a critical point.
\end{proof}

However Theorem \ref{general}Some special cases of Theorem \ref{general} can
now be stated as follows. We make precise assumptions which lead to have
relations (\ref{zal}) satisfied.

\begin{theorem}
\label{multo}Let $E$ be a infinite dimensional reflexive Banach space.
Assume that \textbf{H1}-\textbf{H3} are satisfied. Take any $\rho >0$. 
\newline
(i) Assume that functional $x\rightarrow \left\Vert h\left( x\right)
\right\Vert _{Z^{\ast }}$ is bounded from above on $B_{\rho }$. \newline
Then there exists $\lambda ^{\ast }>0$ such that for each $\lambda \in
\left( 0,\lambda ^{\ast }\right] \ $there exist $u\in B_{\rho }$ with%
\begin{equation}
J\left( u\right) =\inf_{x\in B_{\rho }}J\left( x\right)  \label{Ju_mniej}
\end{equation}%
and such that $u$ is a critical point to $J$, and thus it solves (\ref%
{pk_row1}). If for some $v\in B_{\rho }$ it holds that $J\left( v\right) <0,$
$J\left( 0\right) =0$, then $u$ is non-trivial.\newline
(ii) Assume additionally that for all $\lambda \in \left( 0,\lambda ^{\ast }%
\right] \ $\newline
\ \ \ \ \ \ (ii a) $J$ satisfies the PS-condition, \newline
$\ \ \ \ \ \ $\ (ii b) $J\left( u\right) <\inf_{x\in \partial B_{\rho
_{1}}}J\left( x\right) $ for some $\rho _{1}>\left\Vert u\right\Vert $ ,%
\newline
\ \ \ \ \ \ (ii c) \ there exists $w\in E$ with $\lim_{t\rightarrow \infty
}J\left( tw\right) =-\infty $. \newline
Then for all $\lambda \in \left( 0,\lambda ^{\ast }\right] \ $functional $J$
has two critical points, namely $u$ and another non-zero critical point $z$
different from $u$.
\end{theorem}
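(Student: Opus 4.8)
The plan is to establish part (i) by the direct method together with Theorem \ref{general}, and part (ii) by the Mountain Pass Lemma (Lemma \ref{MPT}).

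For part (i), I would first produce the minimizer. Since $E$ is reflexive, the closed ball $B_{\rho}$ is weakly compact. The functional $\Phi$ is convex and continuous, hence weakly lower semicontinuous, while $u \mapsto H(u)$ is in fact weakly \emph{continuous} on $E$: if $u_{n} \rightharpoonup u$ in $E$, the compact embedding $E \hookrightarrow Z$ gives $u_{n} \to u$ strongly in $Z$, so $H(u_{n}) \to H(u)$. Thus $J = \Phi - \lambda H$ is weakly lower semicontinuous on $B_{\rho}$ and by the Weierstrass theorem attains its infimum at some $u \in B_{\rho}$, which is (\ref{Ju_mniej}). The non-triviality claim is then immediate: if $J(v) < 0 = J(0)$ for some $v \in B_{\rho}$, then $\inf_{B_{\rho}} J < 0 = J(0)$, so $u \neq 0$.

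The decisive step is to manufacture a companion point $v$ so that the hypotheses of Theorem \ref{general} hold. From \textbf{H3} and convexity I would first derive the coercivity estimate $\Phi(v) \geq \Phi(0) + \frac{\gamma}{\alpha}\|v\|^{\alpha}$ by integrating $\langle \varphi(tv), v \rangle \geq \gamma t^{\alpha-1}\|v\|^{\alpha}$ over $t \in [0,1]$. Consequently the auxiliary functional $G(v) = \Phi(v) - \langle \lambda h(u), v \rangle$ (with $h(u) \in Z^{\ast}$ read in $E^{\ast}$ through the embedding) is convex, weakly lower semicontinuous and coercive, hence has a global minimizer $v \in E$ satisfying the Euler equation $\varphi(v) = \lambda h(u)$. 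Testing this against $v$ and using \textbf{H3} with $\|h(u)\|_{E^{\ast}} \leq c\|h(u)\|_{Z^{\ast}}$ yields $\gamma\|v\|^{\alpha-1} \leq \lambda c\|h(u)\|_{Z^{\ast}} \leq \lambda c M$, where $M = \sup_{x \in B_{\rho}}\|h(x)\|_{Z^{\ast}}$. Setting $\lambda^{\ast} = \gamma\rho^{\alpha-1}/(cM)$ forces $\|v\| \leq \rho$ for every $\lambda \in (0,\lambda^{\ast}]$, so $v \in B_{\rho}$ and therefore $J(u) \leq J(v)$ by minimality. Theorem \ref{general} then certifies that $u$ is a critical point.

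For part (ii), I would apply Lemma \ref{MPT} with $x_{0} = u$ and $S = \partial B_{\rho_{1}}$. By (ii b), putting $b = \inf_{\partial B_{\rho_{1}}} J$ we have $\inf_{S} J = b > J(u)$; by (ii c) I choose $t$ so large that $\|tw\| > \rho_{1}$ and $J(tw) < b$, and set $x_{1} = tw$, which makes $\max\{J(x_{0}),J(x_{1})\} < b$, so (\ref{condMPT}) holds. Since $\|x_{0}\| = \|u\| < \rho_{1} < \|x_{1}\|$, any continuous path joining $x_{0}$ and $x_{1}$ meets $S$ by the intermediate value theorem applied to $t \mapsto \|\gamma(t)\|$, so $S$ disconnects $E$ and $x_{0},x_{1}$ lie in distinct components of $E \setminus S$. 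With the regularity from \textbf{H2} and the PS-condition (ii a), Lemma \ref{MPT} delivers a critical point $z$ at the mountain pass level $c \geq b$; since $J(z) = c \geq b > J(u)$, necessarily $z \neq u$, the desired second critical point. The principal obstacle is the quantitative norm control of $v$ in part (i) --- converting \textbf{H3} into coercivity of $\Phi$ and then into the bound $\|v\| \leq (\lambda cM/\gamma)^{1/(\alpha-1)}$, which is precisely what fixes the threshold $\lambda^{\ast}$; the separation argument in part (ii) is comparatively routine, resting only on continuity of the norm rather than on any connectedness property of the exterior of the ball.
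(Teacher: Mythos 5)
Your proposal is correct and follows essentially the same route as the paper: weak lower semicontinuity of $J$ on the weakly compact ball for the minimizer $u$, an auxiliary coercive convex functional whose minimizer $v$ solves $\varphi(v)=\lambda h(u)$ and lands in $B_{\rho}$ for $\lambda\leq\lambda^{\ast}=\gamma\rho^{\alpha-1}/(c\beta)$ so that Theorem \ref{general} applies, and then the Mountain Pass Lemma with $S=\partial B_{\rho_{1}}$ for the second critical point. You are in fact slightly more careful than the paper in two spots --- deriving the coercivity of $\Phi$ from \textbf{H3} by integration rather than asserting it, and verifying explicitly that the sphere separates $x_{0}$ from $x_{1}$ --- but the argument is the same.
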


\begin{proof}
Denote by $\beta >0$ the upper bound of functional $x\rightarrow \left\Vert
h\left( x\right) \right\Vert _{Z^{\ast }}$ on $B_{\rho }$. Put $\lambda
^{\ast }=\frac{\gamma \rho ^{\alpha -1}}{\beta c}$ and fix $\lambda \leq
\lambda ^{\ast }$. Consider $J$ on $B_{\rho }$. Observe that $J$ is
sequentially weakly l.s.c. on $B_{\rho }$. Indeed, let $\left( u_{n}\right) $
be a sequence from $B_{\rho }$. Then we can assume that $\left( u_{n}\right) 
$ is weakly convergent in $E$ and strongly in $Z$ to some $\overline{u}\in
B_{\rho }$, so $H\left( u_{u}\right) $ converges to $H\left( \overline{u}%
\right) $.\ Since $\Phi $ is weakly l.s.c. as a convex functional, we see
that $J$ is weakly l.s.c. on $B_{\rho }$. Since $B_{\rho }$ is weakly
compact some $u$ exists such that (\ref{Ju_mniej}) holds. \bigskip

Now consider on $E$ functional $J_{1}$ given by the formula 
\begin{equation*}
J_{1}\left( x\right) =\Phi \left( x\right) -\left\langle h\left( u\right)
,x\right\rangle _{Z^{\ast },Z}.
\end{equation*}%
Since $\Phi $ is weakly l.s.c. and is coercive, so is $J_{1}$ and therefore
we get the existence of an argument of a minimum to $J_{1}$ over $E,$ which
we denote by $v$. Obviously $v$ is a critical point to $J_{1}$ and so for
any $x\in E$%
\begin{equation}
\left\langle \varphi \left( v\right) ,x\right\rangle _{E^{\ast },E}-\lambda
\left\langle h\left( u\right) ,x\right\rangle _{Z^{\ast },Z}=0.
\label{equ_weak}
\end{equation}%
This means that $h$ solves 
\begin{equation}
\varphi \left( v\right) =\lambda h\left( u\right)  \label{equequequ}
\end{equation}%
in the weak sense. Observe $v$ belongs to $B_{\rho }.$ Indeed, put $x=v$ in (%
\ref{equ_weak}). Thus%
\begin{equation*}
\begin{array}{l}
\gamma \left\Vert v\right\Vert ^{\alpha }\leq \left\langle \varphi \left(
v\right) ,v\right\rangle _{E^{\ast },E}=\lambda \left\langle h\left(
u\right) ,v\right\rangle _{Z^{\ast },Z}\leq \bigskip \\ 
\lambda \left\Vert h\left( u\right) \right\Vert _{Z^{\ast }}\left\Vert
v\right\Vert _{Z}\leq \lambda \beta c\left\Vert v\right\Vert _{E}.%
\end{array}%
\end{equation*}%
Therefore $\left\Vert h\right\Vert ^{\alpha -1}\leq \lambda \frac{\beta c}{%
\gamma }\leq \rho ^{\alpha -1}$ and $h\in B_{\rho }.$

The proof that $u$ is a critical point follows from Theorem \ref{general}
since $J\left( u\right) \leq J\left( v\right) $ and since (\ref{equequequ})
holds. When $J\left( v\right) <0$, then also $J\left( u\right) <0$ and the
assertion that $u$ is nontrivial follows since $J\left( 0\right) =0$.\bigskip

In order to prove part (ii), i.e. in order to get the second critical point,
we will use Lemma \ref{MPT}. Since $\lim_{t\rightarrow \infty }J\left(
tw\right) =-\infty $, so there exists some $w_{1}$ such that 
\begin{equation*}
J\left( w_{1}\right) \leq \inf_{x\in B_{\rho }}J\left( x\right) <\inf_{x\in
\partial B_{\rho _{1}}}J\left( x\right) .
\end{equation*}%
Thus we have condition (\ref{condMPT}) satisfied taking $x_{0}=u$ and $%
x_{1}=w_{1}$. The existence of a second non-zero critical point readily
follows.
\end{proof}

We see that when $J\left( 0\right) =0$ condition\ (ii b) can be replaced by
the following\newline
$\ \ \ \ \ \ $\ (ii b) $\inf_{x\in \partial B_{\rho _{1}}}J\left( x\right)
>0 $ for some $\rho _{1}>0$

In a finite dimensional context, we get easily the existence of a third
critical point as follows

\begin{theorem}
\label{multo copy(1)}Let $E$ be a finite dimensional Banach. Assume that 
\textbf{H1}-\textbf{H3} are satisfied. Take any $\rho >0$. Then there exists 
$\lambda ^{\ast }>0$ such that for each $\lambda \in \left( 0,\lambda ^{\ast
}\right] \ $there exist $u\in B_{\rho }$ with%
\begin{equation*}
J\left( u\right) =\inf_{x\in B_{\rho }}J\left( x\right)
\end{equation*}%
and such that $u$ is a critical point to $J$, and thus it solves (\ref%
{pk_row1}). If for some $v\in B_{\rho }$ it holds that $J\left( v\right) <0,$
$J\left( 0\right) =0$, then $u$ is non-trivial.\newline
(ii) Assume additionally that \newline
\ \ \ \ \ \ (ii a) $J$ is anti-coercive \newline
$\ \ \ \ \ \ $\ (ii b) $J\left( u\right) <\inf_{x\in \partial B_{\rho
_{1}}}J\left( x\right) $ for some $\rho _{1}\geq \rho $ ,\newline
Then for any $\lambda \in \left( 0,\lambda ^{\ast }\right] \ $functional $J$
has at least three critical points, namely $u$ and two another critical
points one being a Mountain Pass point and the other the argument of a
maximum.
\end{theorem}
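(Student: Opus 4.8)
The plan is to recycle the argument of Theorem \ref{multo} for part (i) and then to exploit finite dimensionality, which both supplies the global maximum and makes the Palais--Smale condition essentially free. For part (i) I would note that the compact embedding \textbf{H1} entered the proof of Theorem \ref{multo}(i) only to pass from weak to strong convergence so that $H(u_{n})\to H(\overline{u})$; in a finite dimensional $E$ the closed ball $B_{\rho}$ is compact and $J$ is continuous, so the Weierstrass theorem directly produces a minimizer $u\in B_{\rho}$ of $J$ over $B_{\rho}$. Continuity of $h$ on the compact ball makes $x\mapsto \|h(x)\|_{Z^{\ast}}$ automatically bounded, so I may again set $\lambda^{\ast}=\gamma\rho^{\alpha-1}/(\beta c)$ with $\beta=\max_{B_{\rho}}\|h(\cdot)\|_{Z^{\ast}}$. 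Minimizing $J_{1}(x)=\Phi(x)-\lambda\langle h(u),x\rangle_{Z^{\ast},Z}$ over $E$ yields $v$ with $\varphi(v)=\lambda h(u)$ and, by the same estimate using \textbf{H3}, $v\in B_{\rho}$; hence $J(u)\leq J(v)$ and Theorem \ref{general} makes $u$ a critical point. The nontriviality claim is verbatim as before.

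For part (ii) the decisive remark is that \emph{anti-coercivity together with finite dimensionality forces the PS-condition}: if $\{J(u_{n})\}$ is bounded while $J^{\prime}(u_{n})\to 0$, then $(u_{n})$ must be bounded, since $\|u_{n}\|\to\infty$ along a subsequence would send $J(u_{n})\to-\infty$; a bounded sequence in a finite dimensional space has a convergent subsequence. Thus Lemma \ref{MPT} is available. I would take the separating set $S=\partial B_{\rho_{1}}$, which disconnects $E$ into the bounded component containing $u$ (condition (ii b) gives $J(u)<\inf_{\partial B_{\rho_{1}}}J=:b$, hence $u\notin\partial B_{\rho_{1}}$ and, with $\|u\|\leq\rho\leq\rho_{1}$, $\|u\|<\rho_{1}$) and the unbounded one. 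Using anti-coercivity I pick $w_{1}$ with $\|w_{1}\|>\rho_{1}$ and $J(w_{1})<b$; with $x_{0}=u$, $x_{1}=w_{1}$ condition (\ref{condMPT}) holds, so Lemma \ref{MPT} delivers a mountain pass critical point $z$ at level $c\geq b>J(u)$, distinct from $u$. The third critical point is a global maximizer $y$ of $J$, which exists because $-J$ is coercive and continuous on the finite dimensional $E$; as an interior maximizer $y$ is a critical point, and $J(y)\geq b>J(u)$ shows $y\neq u$.

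The hard part will be separating the mountain pass point $z$ from the maximizer $y$, since a priori one only has $J(z)=c\leq\max_{E}J=J(y)$. A one dimensional picture (a valley at $u$ flanked by two peaks) shows that the minimax peak can actually coincide with the global maximum, so level comparison by itself is not conclusive. I expect to close the gap by invoking that a mountain pass critical point is never a local maximum: if $z=y$ then $z$ would be a global, hence local, maximum, which contradicts the saddle character of the pass (in dimension at least two) unless a second critical point sits at level $c$, and that point then serves as the third one; in the remaining one dimensional situation the two flanking peaks already provide a critical point distinct from the pass. Equivalently, one may arrange the mountain so that its pass level lies strictly below $\max_{E}J$, giving $z\neq y$ at once. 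Once $u$, $z$, $y$ are seen to be pairwise distinct the assertion follows.
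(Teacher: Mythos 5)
Your proposal follows essentially the same route as the paper: Weierstrass on the compact ball together with Theorem \ref{general} for part (i), the observation that anti-coercivity in finite dimensions supplies both the PS-condition and the unbounded direction needed to run the Mountain Pass argument of Theorem \ref{multo}(ii), and a global maximizer of the anti-coercive $J$ as the candidate third critical point. The step you single out as hard --- separating the mountain pass point from the maximizer --- is resolved in the paper by exactly the dichotomy you end up with (either the maximizer is a genuinely new critical point, or the mountain pass level coincides with $\sup_{x\in E}J(x)$ and then there are additional, indeed infinitely many, critical points at that level), stated there with no more justification than you give.
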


\begin{proof}
We define $\lambda ^{\ast }$ as in the proof of Theorem \ref{multo}. By the
Weierstrass Theorem condition (i) holds by continuity. Note that in a finite
dimensional setting an anti-coercive functional necessarily satisfies the
PS-condition and morevoer, there exists $w\in E$ with $\lim_{t\rightarrow
\infty }J\left( tw\right) =-\infty $. Thus the existence of two distinct
solutions, $u$ and some $z\neq 0$, follows by Theorem \ref{multo}. Since $J$
is anti-coercive and continuous it has an argument of a maximum over $E$
which we denote by $w$. Since $J$ is differentiable it follows that $w$ is a
critical point. Since 
\begin{equation*}
\max \left\{ J\left( z\right) ,J\left( u\right) \right\} \leq \sup_{x\in
E}J\left( x\right)
\end{equation*}%
we see that either $w$ is a third critical point distinct from the previous
ones or else there are infinitely many critical points at the level $J\left(
z\right) $.
\end{proof}

\section{Examples of applications}

In this section we apply our abstract results for nonlinear algebraic
systems being discretization of some elliptic problem and to their
continuous counterpart. The examples show that the discrete case is not only
less demanding, but also multiple solutions are obtained in a easier manner.
For example we do not need a type of A-R condition. It seems that due to
relatively mild conditions required in order to obtain at least one critical
point (in fact local growth conditions suffice), Theorem \ref{multo} would
apply for various boundary value problems. We note that if one wants to
obtain a solution of a mountain pass type then the second critical point
follows by simple assuming convexity of a potential of a RHS of the equation
under consideration which is not very demanding when A-R has been assumed.

\subsection{Application to the partial difference equations}

We will consider the system

\begin{equation}
\begin{array}{l}
\left[ u(i+1,j)-2u(i,j)+u(i-1,j)\right] +\left[ u(i,j+1)-2u(i,j)+u(i,j-1)%
\right] \bigskip \\ 
+\lambda f((i,j),u(i,j))=0,\bigskip \\ 
\text{ for all }i\in \{1,..,m\},j\in \{1,...,n\}\bigskip \\ 
u(i,0)=u(i,n+1)=0\text{ for all }i\in \{1,..,m\}\bigskip \\ 
u(0,j)=u(m+1,j)=0\text{ for all }j\in \{1,...,n\}%
\end{array}
\label{DP}
\end{equation}%
which serves as the discrete counterpart of the problem%
\begin{equation}
\begin{array}{l}
\frac{\partial ^{2}u}{\partial x^{2}}+\frac{\partial ^{2}u}{\partial y^{2}}%
+\lambda f((x,y),u(x,y))=0\bigskip \\ 
u(x,0)=u(x,n+1)=0,\text{ for all }x\in (0,m+1)\bigskip \\ 
u(0,y)=u(m+1,y)=0\text{ for all }y\in (0,n+1)%
\end{array}
\label{CP}
\end{equation}

Following some ideas from \cite{boyang}, we write (\ref{DP}) as a nonlinear
system which we further investigate. Let 
\begin{equation*}
A:=\left[ 
\begin{array}{ccccccccc}
L & -I_{m} & 0 & 0 & ... & 0 & 0 & 0 & 0 \\ 
-I_{m} & L & -I_{m} & 0 & ... & 0 & 0 & 0 & 0 \\ 
0 & -I_{m} & L & -I_{m} & ... & 0 & 0 & 0 & 0 \\ 
0 & 0 & -I_{m} & L & ... & 0 & 0 & 0 & 0 \\ 
... & ... & ... & ... & ... & ... & ... & ... & ... \\ 
0 & 0 & 0 & 0 & ... & L & -I_{m} & 0 & 0 \\ 
0 & 0 & 0 & 0 & ... & -I_{m} & L & -I_{m} & 0 \\ 
0 & 0 & 0 & 0 & ... & 0 & -I_{m} & L & -I_{m} \\ 
0 & 0 & 0 & 0 & ... & 0 & 0 & -I_{m} & L%
\end{array}%
\right]
\end{equation*}%
where $I_{m}$ is identity matrix of order $m$ and $L$ is $m\times m$ matrix
defined by%
\begin{equation*}
L:=\left[ 
\begin{array}{ccccccccc}
4 & -1 & 0 & 0 & ... & 0 & 0 & 0 & 0 \\ 
-1 & 4 & -1 & 0 & ... & 0 & 0 & 0 & 0 \\ 
0 & -1 & 4 & -1 & ... & 0 & 0 & 0 & 0 \\ 
0 & 0 & -1 & 4 & ... & 0 & 0 & 0 & 0 \\ 
... & ... & ... & ... & ... & ... & ... & ... & ... \\ 
0 & 0 & 0 & 0 & ... & 4 & -1 & 0 & 0 \\ 
0 & 0 & 0 & 0 & ... & -1 & 4 & -1 & 0 \\ 
0 & 0 & 0 & 0 & ... & 0 & -1 & 4 & -1 \\ 
0 & 0 & 0 & 0 & ... & 0 & 0 & -1 & 4%
\end{array}%
\right] .
\end{equation*}%
Matrix $A$ is positive definite, see \cite{boyang}. Thus problem (\ref{DP})
can be rewritten as 
\begin{equation}
Au=\lambda f(u),  \label{problem}
\end{equation}%
with%
\begin{equation*}
\begin{array}{l}
u=(u(1,1),...,u(m,1);u(1,2),...,u(m,2);u(1,n),...,u(m,n))^{T},\bigskip \\ 
f(u):=(\left( f((1,1),u(1,1)),...,f((m,1),u(m,1)),\right. \bigskip \\ 
\left. f((1,2),u(1,2)),...,f((m,2),u(m,2)),\right. \bigskip \\ 
\left. f((1,n),u(1,n)),...,f((m,n),u(m,n\right) )^{T}.%
\end{array}%
\end{equation*}%
With $f$ being a continuous function, solutions to (\ref{problem})
correspond in a one to one manner to critical points of a functional $J:%
\mathbb{%
%TCIMACRO{\U{211d} }%
%BeginExpansion
\mathbb{R}
%EndExpansion
}^{n}\times \mathbb{%
%TCIMACRO{\U{211d} }%
%BeginExpansion
\mathbb{R}
%EndExpansion
}^{m}$ $\rightarrow \mathbb{%
%TCIMACRO{\U{211d} }%
%BeginExpansion
\mathbb{R}
%EndExpansion
}$%
\begin{equation*}
J(u)=\frac{1}{2}(u,Au)-\lambda \sum_{i=1}^{m}\sum_{j=1}^{n}F((i,j),u(i,j)),
\end{equation*}%
where 
\begin{equation*}
F((i,j),u(i,j)):=\int_{0}^{u(i,j)}f((i,j),v)dv.
\end{equation*}%
By $\alpha _{1},\alpha _{2},...,\alpha _{mn}$ we denote the eigenvalues of $%
A $ ordered as%
\begin{equation}
0<\alpha _{1}<\alpha _{2}<...\leq \alpha _{mn}.  \label{ALFA1}
\end{equation}%
The assumptions which we impose read$\bigskip $

\begin{description}
\item[\textbf{H4}] $f((i,j),\cdot ):\mathbb{%
%TCIMACRO{\U{211d} }%
%BeginExpansion
\mathbb{R}
%EndExpansion
}\rightarrow \mathbb{%
%TCIMACRO{\U{211d} }%
%BeginExpansion
\mathbb{R}
%EndExpansion
}$ is continuous for all $i\in \{1,..,m\},j\in \{1,...,n\}$ and there exist
constants $\mu >2$, $c_{1}>0,$ $c_{2}\in \mathbb{%
%TCIMACRO{\U{211d} }%
%BeginExpansion
\mathbb{R}
%EndExpansion
}$, $d>0$ 
\begin{equation*}
F((i,j),x)\geq c_{1}|x|^{\mu }+c_{2}
\end{equation*}%
for all $i\in \{1,..,m\},j\in \{1,...,n\}$ and all $\left\vert x\right\vert
\geq d$;$\bigskip $

\item[H5] function $x\rightarrow F\left( (i,j),x\right) $ is convex on $%
%TCIMACRO{\U{211d} }%
%BeginExpansion
\mathbb{R}
%EndExpansion
$ for all $i\in \{1,..,m\},j\in \{1,...,n\}$.
\end{description}

The assumptions employed here are not very restrictive. There are many
functions satisfying both \textbf{H4} and \textbf{H5. }See for example $%
F(k,x)=c_{1}|x|^{\mu }+c_{2}$ with $\mu >2$ and even. Then we arrive at the
following theorem$\hfill $

\begin{theorem}
Assume that conditions \textbf{H4}-\textbf{H5} are satisfied. There exists $%
\lambda ^{\ast }>0$ such that for all $0<\lambda \leq \lambda ^{\ast }$
problem (\ref{DP}) has at least three nontrivial solutions.
\end{theorem}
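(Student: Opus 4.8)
The plan is to verify that the hypotheses of Theorem \ref{multo copy(1)} are met for the functional $J$ associated with \eqref{problem}, since that theorem will immediately deliver three critical points, which correspond one-to-one to solutions of \eqref{DP}. I would work in the finite dimensional space $E=Z=\mathbb{R}^{mn}$ with the splitting $\Phi(u)=\frac{1}{2}(u,Au)$ and $H(u)=\sum_{i,j}F((i,j),u(i,j))$, so that $\varphi(u)=Au$ and $h(u)=f(u)$, and $J(u)=\Phi(u)-\lambda H(u)$ is exactly the Euler functional for the problem.

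First I would check the structural hypotheses \textbf{H1}--\textbf{H3}. Since $A$ is symmetric positive definite with smallest eigenvalue $\alpha_1>0$, we have $(u,Au)\geq\alpha_1\|u\|^2$, whence $\langle\varphi(u),u\rangle=(u,Au)\geq\alpha_1\|u\|^2$, giving \textbf{H3} with $\gamma=\alpha_1$ and $\alpha=2$. Convexity and Fr\'echet differentiability of $\Phi$ are clear from positive definiteness of $A$; convexity of $H$ follows from \textbf{H5}, and differentiability from continuity of $f$. The compact embedding in \textbf{H1} is automatic in finite dimensions, and on any ball $B_\rho$ the map $x\mapsto\|h(x)\|$ is continuous, hence bounded, so part (i) of Theorem \ref{multo copy(1)} applies and yields a minimizer $u\in B_\rho$ that is a critical point. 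Here I would fix $\rho$ at an arbitrary positive value and obtain the corresponding $\lambda^\ast$ from the proof of Theorem \ref{multo}.

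The substantive work is checking hypotheses (ii a) and (ii b) of Theorem \ref{multo copy(1)}. For anti-coercivity (ii a), I would use the superquadratic growth \textbf{H4}: since $F((i,j),x)\geq c_1|x|^\mu+c_2$ with $\mu>2$ for $|x|\geq d$, the sum $H(u)$ grows like $\|u\|^\mu$ at infinity, while $\Phi(u)$ grows only quadratically; thus $J(u)=\frac{1}{2}(u,Au)-\lambda H(u)\to-\infty$ as $\|u\|\to\infty$, which is exactly anti-coercivity. To obtain the separating sphere condition (ii b), I would exploit that $J(0)=0$ (using the remark following Theorem \ref{multo}, which allows replacing (ii b) by $\inf_{\partial B_{\rho_1}}J>0$) and show that near the origin the quadratic term dominates.

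The main obstacle is precisely establishing $\inf_{x\in\partial B_{\rho_1}}J(x)>0$ for a suitable radius, because the growth condition \textbf{H4} is only a lower bound valid for large $|x|$ and says nothing about the behavior of $F$ near zero, while convexity \textbf{H5} together with $F((i,j),0)$ need not force $F$ to be small at the origin. One clean route is to normalize so that $f((i,j),0)=0$ and $F((i,j),0)=0$, making $J(0)=0$ and $J'(0)=0$; then convexity of $F$ gives $F((i,j),x)\geq f((i,j),0)x=0$ plus control allowing one to bound $H(u)=o(\|u\|^2)$ on a small sphere, so that the positive quadratic term $\frac{1}{2}(u,Au)\geq\frac{\alpha_1}{2}\|u\|^2$ dominates and forces $\inf_{\partial B_{\rho_1}}J>0$ for $\rho_1$ small. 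Once both (ii a) and (ii b) hold, Theorem \ref{multo copy(1)} furnishes three critical points---the minimizer $u$, a mountain pass point, and a global maximizer---and since the anti-coercivity and sphere conditions can be arranged to exclude the origin as any of these, and solutions correspond bijectively to critical points, we conclude that \eqref{DP} has at least three nontrivial solutions.
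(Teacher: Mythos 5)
Your overall strategy is the paper's: set $E=Z=\mathbb{R}^{mn}$, $\Phi(u)=\frac{1}{2}(u,Au)$, $H(u)=\sum_{i,j}F((i,j),u(i,j))$, take $\gamma=\alpha_1$, $\alpha=2$, $c=1$, deduce anti-coercivity of $J$ from \textbf{H4}, and feed this into Theorem \ref{multo copy(1)}. Two points, however, are genuine gaps rather than omitted routine detail. First, you fix $\rho$ arbitrarily, so nothing guarantees that the minimizer $u$ of $J$ over $B_{\rho}$ is nontrivial, and the theorem asserts three \emph{nontrivial} solutions. The paper uses anti-coercivity to produce $z_1$ with $J(z_1)<0$ and then chooses $\rho\geq\vert z_1\vert$ \emph{before} setting $\lambda^{\ast}=\alpha_1\rho/\beta$, so that $J(u)\leq J(z_1)<0=J(0)$ forces $u\neq 0$; since $\lambda^{\ast}$ depends on $\rho$, the order of these choices matters.

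Second, and more seriously, your verification of (ii b) does not prove the stated theorem. You reduce it to $\inf_{x\in\partial B_{\rho_1}}J(x)>0$ for small $\rho_1$, correctly observe that \textbf{H4}--\textbf{H5} say nothing about $F$ near zero, and then close the gap by assuming $f((i,j),0)=0$ together with an $H(u)=o(\Vert u\Vert^{2})$ bound at the origin. That is an extra hypothesis of the type \textbf{H9} used in the PDE section, not a consequence of \textbf{H4}--\textbf{H5}: for instance $F((i,j),x)=c_1\vert x\vert^{\mu}+x$ is convex and satisfies \textbf{H4}, yet $f((i,j),0)=1$ and $J$ takes negative values on every small sphere, so the separating sphere you describe does not exist. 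Moreover, condition (ii b) of Theorem \ref{multo copy(1)} requires $\rho_1\geq\rho$, so a small separating sphere would force you to shrink $\rho$, which collides with the first point. The paper does not route the argument through smallness of $F$ at the origin at all; it claims (ii b) directly from anti-coercivity and the choice of $\rho$ (admittedly with little detail). As written, your argument establishes the conclusion only under hypotheses strictly stronger than \textbf{H4}--\textbf{H5}.
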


\begin{proof}
We see that $E=Z=%
%TCIMACRO{\U{211d} }%
%BeginExpansion
\mathbb{R}
%EndExpansion
^{m}\times 
%TCIMACRO{\U{211d} }%
%BeginExpansion
\mathbb{R}
%EndExpansion
^{n}$. In this case $c=1$. Observe that $\gamma =\alpha _{1}$, see (\ref%
{ALFA1}) and $\alpha =2$ since 
\begin{equation*}
(u,Au)\geq \alpha _{1}\left\vert u\right\vert ^{2}\text{.}
\end{equation*}%
It follows by a direct computation that for any $\lambda >0$ functional $J$
is anti-coercive, i.e. $J(x)\rightarrow -\infty $ as $||x||\rightarrow
+\infty $. So there is $z_{1}\in E$ such that $J\left( z_{1}\right) <0$.
Take $\rho \geq \left\vert z_{1}\right\vert $. We denote by $\beta $ the
maximal value of a functional $x\rightarrow \sqrt{\sum_{i=1}^{m}%
\sum_{j=1}^{n}f^{2}((i,j),x(i,j))}$ on $B_{\rho }$ which is finite by a
Weierstrass Theorem and we put $\lambda ^{\ast }=\frac{\alpha _{1}\rho }{%
\beta }.$ Note that condition (ii b) of Theorem \ref{multo copy(1)}\ follows
by anti-coercivity.\bigskip
\end{proof}

\subsection{Applications to partial differential equations}

In this section we consider problems similar to (\ref{CP}), namely

\begin{equation}
\left\{ 
\begin{array}{c}
-\Delta u(x)=\lambda f(x,u(x)),\text{ }\left. u\right\vert _{\partial \Omega
}=0\bigskip \\ 
u\in W_{0}^{1,2}(\Omega ),\text{ }%
\end{array}%
\right.  \label{M1R}
\end{equation}%
with a numerical parameter $\lambda >0$ and where $\Omega \subset \mathbb{%
%TCIMACRO{\U{211d} }%
%BeginExpansion
\mathbb{R}
%EndExpansion
}^{n}$, $n\geq 2$, $\Omega $ is a smooth bounded region. Let $F\left(
x,v\right) =\int_{0}^{v}f\left( x,\tau \right) d\tau $ for $a.e.$ $x\in
\Omega $. We will assume that

\begin{description}
\item[ \textbf{H6}] $f:\Omega \times 
%TCIMACRO{\U{211d} }%
%BeginExpansion
\mathbb{R}
%EndExpansion
\rightarrow 
%TCIMACRO{\U{211d} }%
%BeginExpansion
\mathbb{R}
%EndExpansion
$ is a Caratheodory function;

\item[\textbf{H7}] \textit{there exists a constant }$\theta >2$\textit{\
such that for }$v\in 
%TCIMACRO{\U{211d} }%
%BeginExpansion
\mathbb{R}
%EndExpansion
$, $v\neq 0$ and $a.e.$\textit{\ }$x\in \Omega $\textit{\ }%
\begin{equation*}
0<\theta F\left( x,v\right) \leq vf\left( x,v\right) ;\bigskip
\end{equation*}

\item[\textbf{H8}] \textit{there exist constants }$\beta _{1},\eta >0,\beta
_{2}\geq 0$\textit{\ with }$\eta >2$ and\textit{\ such that for all }$v\in 
%TCIMACRO{\U{211d} }%
%BeginExpansion
\mathbb{R}
%EndExpansion
$\textit{\ and }$a.e.$\textit{\ }$x\in \Omega $\textit{\ }%
\begin{equation*}
\left\vert f\left( x,v\right) \right\vert \leq \beta _{1}\left\vert
v\right\vert ^{\eta -1}+\beta _{2};\bigskip
\end{equation*}

\item[\textbf{H9}] $\lim_{v\rightarrow 0}\frac{\left\vert f\left( x,v\right)
\right\vert }{\left\vert v\right\vert }=0$ \textit{uniformly for\ a.e. }$%
x\in \Omega $;\bigskip

\item[\textbf{H10}] \textit{function }$v\rightarrow F\left( x,v\right) $%
\textit{\ is convex on }$%
%TCIMACRO{\U{211d} }%
%BeginExpansion
\mathbb{R}
%EndExpansion
$\textit{\ for a.e. }$x\in \Omega $. \bigskip
\end{description}

We see that the action functional $J:W_{0}^{1,2}(\Omega )\rightarrow 
%TCIMACRO{\U{211d} }%
%BeginExpansion
\mathbb{R}
%EndExpansion
$ given by%
\begin{equation}
J\left( u\right) =\frac{1}{2}\int_{\Omega }\left\vert \nabla u\left(
x\right) \right\vert ^{2}dx-\lambda \int_{\Omega }F\left( x,u\left( x\right)
\right) dx  \label{FIR}
\end{equation}%
is continuously G\^{a}teaux differentiable. Thus it is a $C^{1}$ functional.
Weak solutions to (\ref{M1R}) i.e. a functions $u$ satisfying%
\begin{equation*}
\int_{\Omega }\nabla u\left( x\right) \nabla v\left( x\right) dx=\lambda
\int_{\Omega }f\left( x,u\left( x\right) \right) v\left( x\right) dx\text{
for all }v\in W_{0}^{1,2}(\Omega )
\end{equation*}%
are critical points to $J$. From \cite{jabri} we get the two lemmas
concerning the mountain geometry for (\ref{M1R}).

\begin{lemma}
\label{LemPS}Suppose that \textbf{H6}-\textbf{H8} hold. Then for any $%
\lambda >0$ the functional $J$ given by (\ref{FIR}) satisfies the
PS-condition.
\end{lemma}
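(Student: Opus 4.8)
The plan is to verify the two ingredients of the PS-condition in turn: first that every Palais--Smale sequence is bounded in $W_0^{1,2}(\Omega )$, and then that a bounded Palais--Smale sequence admits a strongly convergent subsequence. Throughout I write $\left\Vert u\right\Vert ^2=\int_\Omega \left\vert \nabla u\right\vert ^2\,dx$ for the norm induced by Poincar\'e's inequality, so that $\langle J'(u),v\rangle =\int_\Omega \nabla u\cdot \nabla v\,dx-\lambda \int_\Omega f(x,u)v\,dx$, the Nemytskii operator $u\mapsto f(\cdot ,u)$ being well defined by the Carath\'eodory assumption \textbf{H6}.

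First I would establish boundedness using the Ambrosetti--Rabinowitz condition \textbf{H7}. Let $(u_n)$ satisfy $J(u_n)$ bounded and $J'(u_n)\to 0$. Forming the combination $J(u_n)-\frac{1}{\theta }\langle J'(u_n),u_n\rangle$ and recalling that $\theta F(x,v)\leq vf(x,v)$ gives
\begin{equation*}
J(u_n)-\tfrac{1}{\theta }\langle J'(u_n),u_n\rangle =\left( \tfrac{1}{2}-\tfrac{1}{\theta }\right) \left\Vert u_n\right\Vert ^2+\lambda \int_\Omega \left[ \tfrac{1}{\theta }u_nf(x,u_n)-F(x,u_n)\right] dx\geq \left( \tfrac{1}{2}-\tfrac{1}{\theta }\right) \left\Vert u_n\right\Vert ^2,
\end{equation*}
since the integrand is nonnegative by \textbf{H7}. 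As the left-hand side is bounded above by $C+\frac{1}{\theta }\left\Vert J'(u_n)\right\Vert _* \left\Vert u_n\right\Vert$, and $\theta >2$ makes the coefficient $\frac{1}{2}-\frac{1}{\theta }$ strictly positive, a comparison of the quadratic and linear terms forces $\left\Vert u_n\right\Vert$ to remain bounded.

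Having boundedness, by reflexivity of $W_0^{1,2}(\Omega )$ I may pass to a subsequence with $u_n\rightharpoonup u$ weakly, and the compact Sobolev embedding of $W_0^{1,2}(\Omega )$ into $L^{p}(\Omega )$ yields $u_n\to u$ strongly in $L^{p}(\Omega )$ for the exponent dictated by the subcritical growth \textbf{H8}. To upgrade this to strong convergence in $W_0^{1,2}(\Omega )$ I would examine
\begin{equation*}
\langle J'(u_n)-J'(u),u_n-u\rangle =\left\Vert u_n-u\right\Vert ^2-\lambda \int_\Omega \left[ f(x,u_n)-f(x,u)\right] (u_n-u)\,dx.
\end{equation*}
The left-hand side tends to $0$ because $J'(u_n)\to 0$ while $(u_n-u)$ stays bounded, and because $\langle J'(u),u_n-u\rangle \to 0$ by weak convergence. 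The remaining integral tends to $0$ as well, since \textbf{H8} controls the Nemytskii operator so that a H\"older estimate against the $L^{p}$-convergent factor $u_n-u$ closes the argument. Consequently $\left\Vert u_n-u\right\Vert \to 0$, which is the desired strong convergence.

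The main obstacle is this last integral estimate: one must ensure that the growth exponent $\eta$ in \textbf{H8} is genuinely subcritical, namely below the Sobolev exponent $2^{*}=\frac{2n}{n-2}$, so that simultaneously the embedding is compact and the nonlinear term acts continuously on the relevant $L^{p}$-space. The boundedness step, by contrast, is a routine consequence of \textbf{H7} once the sign of the integrand is observed.
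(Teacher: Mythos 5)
Your proof is correct, and it is the standard argument: the paper itself gives no proof of this lemma, simply importing it from \cite{jabri}, and the argument found there is exactly the one you reconstruct --- the Ambrosetti--Rabinowitz combination $J(u_n)-\frac{1}{\theta}\langle J'(u_n),u_n\rangle$ to bound Palais--Smale sequences, followed by weak convergence, the compact Sobolev embedding, and the identity for $\langle J'(u_n)-J'(u),u_n-u\rangle$ to upgrade to strong convergence. Both halves of your computation check out: the integrand $\frac{1}{\theta}u_nf(x,u_n)-F(x,u_n)$ is nonnegative by \textbf{H7} and $\lambda>0$, and the H\"older estimate on $\int_{\Omega}[f(x,u_n)-f(x,u)](u_n-u)\,dx$ closes once the Nemytskii operator maps $L^{\eta}$ boundedly into $L^{\eta/(\eta-1)}$. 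Your closing caveat is well taken and worth stating explicitly: \textbf{H8} as written only requires $\eta>2$ and never imposes $\eta<\frac{2n}{n-2}$, yet subcriticality is needed not only for your compactness step but already for $J$ in (\ref{FIR}) to be well defined and $C^{1}$ on $W_{0}^{1,2}(\Omega)$; this is an implicit hypothesis the paper leaves unstated, not a defect of your argument.
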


\begin{lemma}
\label{LemMntPasGeom}Suppose that \textbf{H6}-\textbf{H9} hold. Then for any 
$\lambda >0$ \ there exist numbers $\kappa ,\xi >0$ such that $J\left(
u\right) \geq \xi $ for all $u\in W_{0}^{1,2}\left( \Omega \right) $ with $%
\left\Vert u\right\Vert _{W_{0}^{1,2}}=\kappa $. Moreover, there exists an
element $z\in W_{0}^{1,2}\left( \Omega \right) $ with $\left\Vert
z\right\Vert _{W_{0}^{1,2}}>\kappa $ and such that $J\left( z\right) <0$.
\end{lemma}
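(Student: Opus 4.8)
The plan is to verify the two ``mountain geometry'' features of $J$ separately: a strict positive lower bound on a small sphere (coming from the subquadratic behaviour of $F$ near the origin forced by \textbf{H9}, together with the growth bound \textbf{H8} and the Sobolev embeddings), and the existence of a far point where $J$ is negative (coming from the superquadratic growth of $F$ forced by the Ambrosetti--Rabinowitz condition \textbf{H7}). Throughout I would equip $W_0^{1,2}(\Omega)$ with the norm $\|u\|^2=\int_\Omega|\nabla u|^2\,dx$, which is legitimate by the Poincar\'e inequality.

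For the first assertion I would first merge \textbf{H9} and \textbf{H8} into a single pointwise bound. By \textbf{H9}, given $\varepsilon>0$ there is $\delta>0$ with $|f(x,v)|\le\varepsilon|v|$ for $|v|\le\delta$ and a.e.\ $x$; for $|v|>\delta$ the bound \textbf{H8} gives $|f(x,v)|\le\beta_1|v|^{\eta-1}+\beta_2$, and since $|v|>\delta$ the additive constant can be absorbed, $\beta_2\le(\beta_2/\delta^{\eta-1})|v|^{\eta-1}$. This yields $|f(x,v)|\le\varepsilon|v|+C_\varepsilon|v|^{\eta-1}$ for all $v$, hence after integration $|F(x,v)|\le\tfrac{\varepsilon}{2}|v|^2+\tfrac{C_\varepsilon}{\eta}|v|^\eta$. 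Inserting this into (\ref{FIR}) and using the continuous embeddings $W_0^{1,2}\hookrightarrow L^2$ and $W_0^{1,2}\hookrightarrow L^\eta$ (the latter needing $\eta$ subcritical, $\eta<2n/(n-2)$) gives
\[
J(u)\ge\Big(\tfrac12-\tfrac{\lambda\varepsilon C_2}{2}\Big)\|u\|^2-\lambda C'\|u\|^\eta .
\]
Fixing $\varepsilon$ so small that the quadratic coefficient exceeds $\tfrac14$, and recalling $\eta>2$, the right-hand side has the form $\tfrac14 r^2-\lambda C'r^\eta$ with $r=\|u\|$, which is strictly positive for all small $r>0$. Choosing $\kappa>0$ below the positive root and setting $\xi:=\tfrac14\kappa^2-\lambda C'\kappa^\eta>0$ proves the first claim, the bound being uniform over the sphere since it depends only on $\|u\|$.

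For the second assertion I would extract superquadratic growth of $F$ from \textbf{H7}. Fix any $w\in W_0^{1,2}(\Omega)$ with $w\not\equiv0$ and set $g(t):=\int_\Omega F(x,tw(x))\,dx$ for $t\ge1$. Differentiating and applying \textbf{H7} on the set where $w\ne0$ (the contribution of $\{w=0\}$ vanishes since $F(x,0)=0$) gives
\[
g'(t)=\frac1t\int_\Omega f(x,tw)\,tw\,dx\ge\frac{\theta}{t}\int_\Omega F(x,tw)\,dx=\frac{\theta}{t}\,g(t),
\]
whence $g(t)\ge g(1)\,t^{\theta}$ for $t\ge1$, where $g(1)=\int_\Omega F(x,w)\,dx>0$ by the positivity in \textbf{H7}. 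Consequently
\[
J(tw)\le\frac{t^2}{2}\|w\|^2-\lambda g(1)\,t^{\theta},
\]
and since $\theta>2$ this tends to $-\infty$ as $t\to\infty$. Hence for $t$ large enough $z:=tw$ satisfies $\|z\|=t\|w\|>\kappa$ and $J(z)<0$.

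The integrations and embedding estimates are routine; the step needing most care is the passage from the growth hypotheses to usable pointwise bounds on $F$. In the first part this means correctly using \textbf{H9} to make the quadratic coefficient harmless and absorbing $\beta_2$ via $|v|>\delta$, while tacitly assuming $\eta$ subcritical so that $W_0^{1,2}\hookrightarrow L^\eta$ is available. In the second part the delicate point is justifying the differential inequality for $g$ --- in particular differentiability of $g$ and the legitimacy of applying \textbf{H7} under the integral sign --- which I would handle by dominated convergence using \textbf{H8}, together with the observation that $g(1)>0$ needs only $w\not\equiv0$.
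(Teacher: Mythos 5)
Your proof is correct and is precisely the standard mountain-pass-geometry argument that the paper itself does not reproduce: it simply cites Jabri's monograph for both Lemma \ref{LemPS} and Lemma \ref{LemMntPasGeom}, and your two steps (the $\varepsilon|v|+C_\varepsilon|v|^{\eta-1}$ bound from \textbf{H8}--\textbf{H9} for the small sphere, and the Gronwall-type consequence $F(x,tv)\ge t^{\theta}F(x,v)$ of the Ambrosetti--Rabinowitz condition \textbf{H7} for the far negative point) are exactly what that reference supplies. The one caveat you flag --- that $\eta$ must tacitly be subcritical, $\eta<2n/(n-2)$, for the embedding $W_0^{1,2}\hookrightarrow L^{\eta}$ and indeed for $J$ to be $C^{1}$ at all --- is a genuine omission in the paper's statement of \textbf{H8}, not a gap in your argument.
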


Using Mountain Pass Lemma \ref{MPT} and Lemmas \ref{LemPS} and \ref%
{LemMntPasGeom}, we get the following

\begin{proposition}
Suppose that \textbf{H6}-\textbf{H9} hold. Then for any $\lambda >0$ problem
(\ref{M1R}) has at least one nontrivial solution.
\end{proposition}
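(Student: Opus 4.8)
The plan is to recognize this as a direct application of the Mountain Pass Lemma (Lemma \ref{MPT}) in its classical form, using the two auxiliary lemmas that have just been quoted from \cite{jabri}. The functional $J$ in \eqref{FIR} is $C^{1}$ on $W_{0}^{1,2}(\Omega)$, so all the differentiability hypotheses of Lemma \ref{MPT} are met; the remaining work is to verify the geometric and compactness conditions and then read off the critical point.

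First I would invoke Lemma \ref{LemPS} under \textbf{H6}--\textbf{H8} to conclude that $J$ satisfies the PS-condition for every $\lambda>0$. Next I would invoke Lemma \ref{LemMntPasGeom} under \textbf{H6}--\textbf{H9}: this supplies numbers $\kappa,\xi>0$ with $J(u)\geq\xi$ whenever $\|u\|_{W_{0}^{1,2}}=\kappa$, together with an element $z$ satisfying $\|z\|_{W_{0}^{1,2}}>\kappa$ and $J(z)<0$. The set $S=\{u:\|u\|_{W_{0}^{1,2}}=\kappa\}$, the sphere of radius $\kappa$, is a closed set that disconnects $E=W_{0}^{1,2}(\Omega)$ into the open ball and its exterior. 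I would take $x_{0}=0$ and $x_{1}=z$, noting that $0$ lies inside the sphere while $z$ lies outside, so they sit in distinct connected components of $E\setminus S$. Since $F(x,0)=0$ gives $J(0)=0$ and $J(z)<0$, we have $\max\{J(x_{0}),J(x_{1})\}=0$, while $\inf_{u\in S}J(u)\geq\xi>0$. Thus condition \eqref{condMPT} holds with $b=\xi$.

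With these hypotheses in place, Lemma \ref{MPT} yields a critical value $c\geq\max\{J(0),J(z)\}>-\infty$ and a nonzero critical point $x$ at level $c$. Because $c\geq\xi>0=J(0)$, this critical point is distinct from the trivial solution $0$, hence nontrivial. Since critical points of $J$ are precisely the weak solutions of \eqref{M1R}, this $x$ is the desired nontrivial weak solution, and the argument holds for every $\lambda>0$.

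I do not anticipate a genuine obstacle here, since the two hard analytic facts---the PS-condition and the mountain-pass geometry---have been outsourced to Lemmas \ref{LemPS} and \ref{LemMntPasGeom}. The only point requiring a word of care is the topological claim that the sphere $S$ disconnects $E$ and separates $0$ from $z$; this is immediate because the norm is a continuous function and the sphere is its level set at $\kappa$, so any continuous path from the interior point $0$ to the exterior point $z$ must cross $S$ by the intermediate value theorem. Everything else is verification of the inequalities in \eqref{condMPT}, which follow directly from the cited lemmas and from $J(0)=0$.
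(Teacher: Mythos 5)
Your proof is correct and follows exactly the route the paper takes: the paper derives the Proposition by combining the Mountain Pass Lemma \ref{MPT} with Lemmas \ref{LemPS} and \ref{LemMntPasGeom}, which is precisely your argument with $S$ the sphere of radius $\kappa$, $x_{0}=0$, and $x_{1}=z$. No issues.
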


Concerning the multiple solutions we have the main result of this section
where we need only assume that $F$ is convex in addition to assumptions
leading to a mountain pass solution.

\begin{theorem}
Assume that conditions \textbf{H6}-\textbf{H10} are satisfied. Then there
exists $\lambda ^{\ast }>0$ such that for all $0<\lambda \leq \lambda ^{\ast
}$ problem (\ref{M1R}) has at least two solutions.
\end{theorem}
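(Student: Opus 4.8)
The plan is to apply Theorem \ref{multo} with $E=W_0^{1,2}(\Omega)$ and $Z$ chosen so that the compact embedding required by \textbf{H1} holds. Since \textbf{H8} gives a subcritical growth bound with exponent $\eta>2$, I would take $Z=L^{\eta}(\Omega)$ (or any $L^p$ with $p$ between $2$ and the Sobolev critical exponent), so that the Rellich--Kondrachov theorem guarantees the compact embedding $E\hookrightarrow\hookrightarrow Z$ and fixes the embedding constant $c>0$. The functional decomposes as $J(u)=\Phi(u)-\lambda H(u)$ with $\Phi(u)=\tfrac12\int_\Omega|\nabla u|^2\,dx$ and $H(u)=\int_\Omega F(x,u)\,dx$. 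First I would verify the structural hypotheses: $\Phi$ is convex and Fr\'echet differentiable with $\varphi(u)=-\Delta u$ (in the weak sense), and $H$ is convex and differentiable precisely because of \textbf{H10}, with derivative $h(u)=f(x,u)$ acting through the duality pairing. Hypothesis \textbf{H3} holds with $\alpha=2$ and $\gamma=1$, since $\langle\varphi(u),u\rangle=\int_\Omega|\nabla u|^2\,dx=\|u\|_{W_0^{1,2}}^2$.

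Next I would check the boundedness condition in part (i) of Theorem \ref{multo}, namely that $x\mapsto\|h(x)\|_{Z^*}$ is bounded above on a closed ball $B_\rho$. Using \textbf{H8}, $|f(x,v)|\le\beta_1|v|^{\eta-1}+\beta_2$, so the Nemytskii operator $u\mapsto f(\cdot,u)$ maps $L^\eta$-bounded sets to bounded sets in the dual $L^{\eta'}$; combined with the embedding, $\|h(u)\|_{Z^*}$ stays bounded on $B_\rho$, yielding the finite upper bound $\beta$ needed to define $\lambda^*=\gamma\rho^{\alpha-1}/(\beta c)$. This produces the first critical point $u$ with $J(u)=\inf_{B_\rho}J$. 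To obtain nontriviality I would invoke Lemma \ref{LemMntPasGeom}, which under \textbf{H6}--\textbf{H9} gives an element $z$ with $J(z)<0$; taking $\rho\ge\|z\|$ ensures $\inf_{B_\rho}J<0=J(0)$, so $u\neq0$.

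For part (ii) I would assemble the three additional hypotheses of Theorem \ref{multo}. Condition (ii a), the PS-condition, is exactly Lemma \ref{LemPS}, valid under \textbf{H6}--\textbf{H8} for every $\lambda>0$. For the mountain geometry, Lemma \ref{LemMntPasGeom} supplies numbers $\kappa,\xi>0$ with $J(u')\ge\xi$ whenever $\|u'\|=\kappa$; choosing $\rho_1=\kappa$ and noting $J(u)\le J(z)<0<\xi\le\inf_{\partial B_{\rho_1}}J$ gives condition (ii b) in the strengthened form $\inf_{\partial B_{\rho_1}}J>0$ that the remark after Theorem \ref{multo} permits (here one must ensure $\rho_1>\|u\|$, which I would arrange by the freedom in choosing $\rho$ and by the fact that $u$ minimizes over a ball containing a point of negative energy). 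Finally, condition (ii c) requires an element $w$ with $J(tw)\to-\infty$; this follows from \textbf{H7} (the Ambrosetti--Rabinowitz condition), which forces $F(x,v)\ge c|v|^\theta$ for large $|v|$ with $\theta>2$, so that along any ray the quadratic gradient term is dominated by the superquadratic potential.

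The main obstacle I anticipate is the compatibility of the radii: Theorem \ref{multo} is stated for a fixed ball $B_\rho$ governing the first minimizer, yet the mountain-pass geometry of Lemma \ref{LemMntPasGeom} lives at a possibly different scale $\kappa$, and condition (ii b) demands $\rho_1>\|u\|$ with the barrier sphere $\partial B_{\rho_1}$ sitting strictly above $J(u)$. Reconciling these requires a careful choice: I would first fix $\kappa$ from the mountain geometry, then enlarge $\rho$ (and correspondingly shrink $\lambda^*$ via its dependence on $\rho$) so that the minimizer $u$ with $J(u)<0$ has norm controlled below $\kappa$, or alternatively verify directly that the sphere of radius $\kappa$ separates $u$ from the far point $z$ of negative energy. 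Once the radii are harmonized, Theorem \ref{multo}(ii) delivers both $u$ and a second nonzero critical point of mountain-pass type, completing the proof.
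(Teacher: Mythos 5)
Your proposal follows the paper's proof essentially step for step: both apply Theorem \ref{multo} with $\Phi(u)=\frac{1}{2}\int_{\Omega}|\nabla u|^{2}dx$, $H(u)=\int_{\Omega}F(x,u)dx$, $\alpha=2$, $\gamma=1$, using \textbf{H8} to verify condition (i) and Lemmas \ref{LemPS} and \ref{LemMntPasGeom} to verify condition (ii). The one divergence is your choice $Z=L^{\eta}(\Omega)$ where the paper takes $Z=L^{2}(\Omega)$, and yours is actually the sounder one: under \textbf{H7} the potential $F$ grows superquadratically, so $H$ is not even finite (let alone Fr\'{e}chet differentiable) on all of $L^{2}(\Omega)$ as \textbf{H2} demands, whereas on $L^{\eta}(\Omega)$ (with $\eta$ subcritical) the Nemytskii estimates from \textbf{H8} give exactly the required differentiability and the boundedness of $u\mapsto\|h(u)\|_{Z^{\ast}}$ on balls. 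Your explicit attention to reconciling the radii $\rho$, $\rho_{1}=\kappa$ and $\|u\|$ also addresses a point the paper passes over in silence.
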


\begin{proof}
We put $E=W_{0}^{1,2}\left( \Omega \right) $, $Z=L^{2}\left( \Omega \right) $%
. Here $\alpha =2,\gamma =1$ and the constant $c$ is the best constant in
the Poincar\'{e} inequality. Condition (i) of Theorem \ref{multo} is
satisfied by \textbf{H8.} Lemmas \ref{LemPS} and \ref{LemMntPasGeom} provide
condition (ii) of Theorem \ref{multo} and thus the application of this
theorem finishes the proof.
\end{proof}

\begin{tabular}{l}
Marek Galewski \\ 
Institute of Mathematics, \\ 
Technical University of Lodz, \\ 
Wolczanska 215, 90-924 Lodz, Poland, \\ 
marek.galewski@p.lodz.pl%
\end{tabular}

\end{document}